\def\squareforqed{\hbox{\rlap{$\sqcap$}$\sqcup$}}
\def\qed{\ifmmode\squareforqed\else{\unskip\nobreak\hfil
\penalty50\hskip1em\null\nobreak\hfil\squareforqed
\parfillskip=0pt\finalhyphendemerits=0\endgraf}\fi\medskip}
\newcommand{\udot}{{}^{\textstyle .}}
\newcommand{\Mat}{\mathrm M}
\newcommand{\He}{\mathrm{He}}
\newcommand{\Ja}{\mathrm{J}}
\newcommand{\Aut}{\mathrm{Aut}}
\newtheorem{theorem}{Theorem}
\newtheorem{lemma}{Lemma}
\title{A negative answer to a question of Aschbacher}
\author{Robert A. Wilson}
\address{School of Mathematical Sciences, Queen Mary University of London,
Mile End Road, London E\textup{1 4}NS, U.K.}
   \email{r.a.wilson@qmul.ac.uk}
\date{First draft 11/04/2018; this version 07/06/2018}
\begin{document}

\begin{abstract}
We give infinitely many examples
to show that, even for simple groups $G$, it is
possible for the lattice of overgroups of a subgroup $H$ to be
the Boolean lattice of rank $2$, in such a way that the two maximal
overgroups of $H$ are conjugate in $G$. This answers negatively a question
posed by Aschbacher.
\end{abstract}
\maketitle
\section{The question}
In a recent survey article on the subgroup structure of finite groups
\cite{Asch}, in the context of discussing open problems on the possible structures of
subgroup lattices of finite groups,
Aschbacher poses the following specific question. Let $G$ be a finite
group, $H$ a subgroup of $G$, and suppose that $H$ is contained in exactly
two maximal subgroups $M_1$ and $M_2$ of $G$, and 
that $H$ is maximal
in both $M_1$ and $M_2$. Does it follow that $M_1$ and $M_2$ are not
conjugate in $G$? This is Question 8.1 in \cite{Asch}.
For $G$ a general group, he asserts there is a counterexample, 
not given in \cite{Asch},
so
he restricts this question to the case $G$ almost simple, that is
$S\le G\le\Aut(S)$ for some simple group $S$. This is Question 8.2 in \cite{Asch}.
\section{The answer}
In fact, the answer is no, even for simple groups $G$. 
The smallest example seems to be the simple Mathieu group
$\Mat_{12}$
of order $95040$.
\begin{theorem}
Let $G= \Mat_{12}$, and $H\cong A_5$
acting transitively on the $12$ points permuted by $\Mat_{12}$. 
Then $H$ lies in exactly two other subgroups of $G$, both lying in the
single conjugacy class of maximal subgroups $L_2(11)$.
\end{theorem}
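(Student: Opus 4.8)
The plan is to work inside $G=\Mat_{12}$ acting on its natural set $\Omega$ of $12$ points. Since $H$ is transitive on $\Omega$ and $|H|=60$, a point stabiliser in $H$ has order $5$, hence is cyclic; as the only subgroups of $A_5$ lying strictly between $C_5$ and $A_5$ are the copies of $D_{10}$, the action of $H$ on $\Omega$ is imprimitive with a \emph{unique} nontrivial block system, namely $6$ blocks of size $2$. I also record the classical fact that $A_5$ is a maximal subgroup of $L_2(11)$, of index $11$; consequently there are no subgroups strictly between $H$ and any conjugate of $L_2(11)$ containing it, so once we know which maximal subgroups contain $H$ we know the whole overgroup poset.

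The key reduction is that any subgroup of $G$ containing $H$ is again transitive on $\Omega$, so it suffices to decide which maximal subgroups of $G$ can contain a conjugate of $H$. Running through the maximal subgroups of $\Mat_{12}$: the groups $3^2{:}2S_4$, $2^{1+4}{:}S_3$, $4^2{:}D_{12}$ and $A_4\times S_3$ are excluded immediately because $60$ does not divide their orders; the point-stabiliser class of $M_{11}$ and the class $A_6{\cdot}2^2$ (the stabiliser of an unordered pair of points, with orbits of sizes $2$ and $10$ on $\Omega$) are excluded because they are intransitive on $\Omega$; and for the two remaining transitive classes — the second class of $M_{11}$ and $2\times S_5$ — one checks from their orbit structures that every subgroup isomorphic to $A_5$ inside them is intransitive on $\Omega$ (in $2\times S_5$ the only subgroup isomorphic to $A_5$ is $A_5\times 1$, which meets any point stabiliser in a group of order $10$ and so has two orbits of size $6$; inside $M_{11}$ in its $3$-transitive degree-$12$ action every $A_5$ lies in a point stabiliser, a pair stabiliser, or an $S_5$, and in each of these cases has at least two orbits on $\Omega$). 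Hence the only maximal subgroups of $G$ containing $H$ are conjugates of $L_2(11)$.

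It remains to count the conjugates of $L_2(11)$ that contain a fixed $H$. A copy of $L_2(11)$ in $G$ is transitive on $\Omega$ — the unique transitive degree-$12$ action of $L_2(11)$ being the one on $\mathrm{PG}(1,11)$ — it contains exactly $22$ subgroups isomorphic to $A_5$ (two classes of $11$), and a Burnside count using the cycle types of the elements of $L_2(11)$ on $\mathrm{PG}(1,11)$ (identity fixes $12$ points, involutions and elements of order $3$ fix none, elements of order $5$ fix $2$) shows that all $22$ of these $A_5$'s are transitive on $\Omega$. Moreover $N_G(H)=H$: if $N_G(H)$ were larger it would be a proper subgroup of $G$ properly containing $H$, hence contained in some conjugate $L$ of $L_2(11)$ by the previous paragraph, and then $N_G(H)=N_G(H)\cap L\le N_L(H)=H$ since $A_5$ is self-normalising in $L_2(11)$; in the same way every proper overgroup of $H$ lies in some conjugate of $L_2(11)$, so each such $H$ lies in at least one. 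Double counting the pairs $(H',L)$ with $H'\cong A_5$ transitive on $\Omega$, $L\cong L_2(11)$ and $H'\le L$ — there are $144\cdot 22$ of them, and each $G$-class of transitive $A_5$'s has size $|G|/60$ — then shows that a fixed $H$ lies in exactly two conjugates of $L_2(11)$ if the $22$ copies of $A_5$ in a single $L_2(11)$ form one $G$-class, and in exactly one if they split into two $G$-classes.

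The crux is therefore the fusion statement: that $\Mat_{12}$ has a single conjugacy class of subgroups isomorphic to $A_5$ and transitive on $\Omega$, equivalently that the two $L_2(11)$-classes of such $A_5$'s are fused in $G$. I expect to settle this by an explicit computation in $\Mat_{12}$ — exhibiting an element of $G$ conjugating one class onto the other, equivalently a second copy of $L_2(11)$ through a given $H$ — and this, together with the verification of the $A_5$-subgroup bookkeeping inside the transitive $M_{11}$ and inside $2\times S_5$ in the second paragraph, is where essentially all the real work lies; everything else is the purely combinatorial reduction above. Given the fusion, the overgroups of $H$ are exactly $H$, the two conjugates of $L_2(11)$, and $G$, which is the assertion of the theorem.
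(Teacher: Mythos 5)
Your overall strategy is sound and genuinely different from the paper's: you reduce to transitive maximal subgroups, eliminate every class except $L_2(11)$ by order and orbit considerations, and then set up a double count of pairs $(H',L)$ that correctly reduces the theorem to a single fusion statement, namely that the two $L_2(11)$-classes of transitive $A_5$'s are fused in $\Mat_{12}$. But that fusion statement is exactly where your proof stops: you say you ``expect to settle this by an explicit computation'' and acknowledge that this is where essentially all the real work lies. As written, your argument establishes only the dichotomy ``$H$ lies in exactly two conjugates of $L_2(11)$, or in exactly one,'' and does not decide between the two alternatives; the second alternative is precisely the situation in which the theorem would be false. So there is a genuine gap at the decisive step.

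The paper closes this gap without any computation inside $\Mat_{12}$, by passing to $\Aut(G)\cong \Mat_{12}{:}2$ and quoting from \cite{maxaut} that $N_{\Aut(G)}(H)\cong S_5$ and $N_{\Aut(G)}(M)\cong PGL_2(11)$. Since $PGL_2(11)$ does not contain $S_5$, an outer element normalizing $H$ cannot normalize $M$, so it carries $M$ to a second conjugate of $M$ containing $H$; combined with your dichotomy this forces the answer ``two.'' You could graft exactly this observation onto your double count to complete the proof. A secondary, smaller gap: your elimination of the transitive class of $\Mat_{11}$ rests on the unproved assertion that every $A_5$ in $\Mat_{11}$ lies in a point stabilizer, a pair stabilizer or an $S_5$, and is intransitive on the $12$ points in each case; this is true, but it requires the same kind of subgroup data for $\Mat_{11}$ that you are otherwise trying to avoid, whereas the paper simply reads the maximal overgroups of a transitive $A_5$ off the Atlas list for $\Mat_{12}$.
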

\begin{proof}
The maximal subgroups of $G$ are well-known, and are listed in 
the Atlas of Finite Groups
\cite[p.~33]{Atlas}. From this list it follows 
that the only maximal subgroups of $G$ that contain $H$
are conjugates of the transitive subgroup $M\cong L_2(11)$. 
The maximal subgroups of $\Aut(G)\cong \Mat_{12}{:}2$ are determined in
\cite{maxaut}, where it is shown in particular that the normalizers in
$\Aut(G)$ of $H$ and $M$ are $\Aut(H)\cong S_5$ and $\Aut(M)\cong PGL_2(11)$
respectively. Since $PGL_2(11)$ does not contain $S_5$, it follows
that there are precisely two conjugates of $M$ that contain $H$,
and that these conjugates are interchanged by elements of $\Aut(H)\setminus H$.
\end{proof}

Of course, one example answers the specific question, but does not
address the context in which the question was asked. One needs to consider
rather how many examples there are, or whether the phenomenon
just exhibited is relatively common or rare.
In fact, 
the results of \cite{maxaut} can be used to deduce the existence of one more
example, 
that is 
in the sporadic simple group of Held. The relevant subgroup information,
obtained in \cite{Butler,maxaut},
is summarised
in \cite[p.~104]{Atlas}.
\begin{theorem}
Let $G=\He$ and $H\cong (A_5\times A_5).2.2$.
Then $H$ is contained in just two other subgroups of $G$, both lying in the single
class of maximal subgroups isomorphic to $S_4(4){:}2$.
\end{theorem}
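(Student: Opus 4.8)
The plan is to follow the same strategy as in the proof of Theorem~1. First I would read off from the Atlas \cite[p.~104]{Atlas} the maximal subgroups of $\He$. Since $|H| = 2^6\cdot 3^2\cdot 5^2 = 14400$, the only maximal subgroups of $\He$ whose order is divisible by $5^2$ and is at least $14400$ are the conjugates of $M\cong S_4(4){:}2$; and $H$ genuinely lies in $M$, appearing there (by the analysis of \cite{maxaut,Butler}) as $N_M(D)$, where $D\cong (A_5\times A_5).2$ is the subgroup $\mathrm{O}_4^+(4)$ of $S_4(4)\cong\Sp_4(4)$ stabilising a non-degenerate quadratic form of $+$ type, and the extra factor of $2$ in $H$ is induced by the field automorphism of $S_4(4)$ contained in $M$. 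In particular the only maximal subgroups of $G$ that contain $H$ are conjugates of $M$.

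Next I would invoke the determination in \cite{maxaut} (itself resting on \cite{Butler}) of the maximal subgroups of $\Aut(\He)\cong\He{:}2$. What is needed is that the normalizers in $\He{:}2$ of $M$ and of $H$ are $\Aut(M)\cong S_4(4){:}4$ and $\Aut(H)\cong S_5\wr S_2$ respectively, the latter of order $28800$ and in fact a novelty maximal subgroup of $\He{:}2$ --- which is precisely the phenomenon under discussion.

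The crucial point is the analogue of the observation ``$PGL_2(11)$ does not contain $S_5$'' from the proof of Theorem~1: one must show that $\Aut(M)\cong S_4(4){:}4$ contains no subgroup isomorphic to $\Aut(H)\cong S_5\wr S_2$. I would deduce this from two facts: $\mathrm{O}_4^+(4)\cong(A_5\times A_5).2$ is maximal in $\Sp_4(4)$, so that $N_{\Sp_4(4)}(A_5\times A_5)=\mathrm{O}_4^+(4)$ has order only $7200$; and $S_5\wr S_2$ has abelianisation $C_2\times C_2$, hence no quotient isomorphic to $C_4\cong\mathrm{Out}(\Sp_4(4))$. Indeed, were $S_5\wr S_2$ a subgroup of $\Sp_4(4){:}4$, its perfect normal subgroup $A_5\times A_5$ would lie in $\Sp_4(4)$, so $(S_5\wr S_2)\cap\Sp_4(4)$ would normalise $A_5\times A_5$ and hence have order at most $7200$, forcing $S_5\wr S_2$ to have a quotient of order $4$, a contradiction. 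Granted this, exactly as in Theorem~1 it follows that $H$ is contained in precisely two conjugates of $M$, interchanged by the elements of $\Aut(H)\setminus H$; and since $\He$ has a single conjugacy class of subgroups $S_4(4){:}2$, both of these lie in that class. Finally $\mathrm{O}_4^+(4)$ is a maximal subgroup of $S_4(4)$, which forces $H=N_M(\mathrm{O}_4^+(4))$ to be maximal in $M$; hence any subgroup $K$ with $H<K<G$ lies in a maximal subgroup of $G$ containing $H$, so lies in one of the two conjugates $M_1,M_2$ of $M$, and there equals $M_1$ or $M_2$ by maximality of $H$. Thus the overgroup lattice of $H$ in $G$ is the Boolean lattice of rank~$2$, with $M_1$ and $M_2$ the two subgroups strictly between $H$ and $G$.

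I expect the main obstacle to be exactly the non-containment $S_4(4){:}4\not\supseteq S_5\wr S_2$. In Theorem~1 the corresponding step was immediate, since $PGL_2(11)$ is simply too small to contain $S_5$; here the orders are entirely compatible ($28800$ divides $|S_4(4){:}4|$), so a genuine structural argument --- the one sketched above, or equivalently the explicit analysis of $\He{:}2$ in \cite{maxaut} --- is unavoidable. The other point needing care is extracting the precise isomorphism types $\Aut(H)\cong S_5\wr S_2$ and $\Aut(M)\cong S_4(4){:}4$ of the two normalizers from \cite{maxaut,Butler}; with those in hand, the rest is a routine order count in the Atlas.
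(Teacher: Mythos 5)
Your proposal is correct and follows essentially the same route as the paper: identify conjugates of $S_4(4){:}2$ as the only candidate maximal overgroups, then compare the normalizers $S_5\wr 2$ and $S_4(4){:}4$ in $\Aut(G)$ and use the non-containment of the former in the latter to produce the two interchanged conjugates. You merely supply more detail than the paper does, in particular an explicit argument (via the abelianisation of $S_5\wr 2$ and $N_{\Sp_4(4)}(A_5\times A_5)=\mathrm{O}_4^+(4)$) for the asserted non-containment, and a check that $H$ is maximal in $M$.
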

\begin{proof}
It is shown in \cite{Butler} (see also \cite{maxaut}) 
that there is a unique class of $A_5\times A_5$ in the
Held group, and that the normalizer of any $A_5\times A_5$ in $G$ is
a group $H$ of shape $(A_5\times A_5).2^2$, in which there is no normal $A_5$.
It follows that $H$ lies inside a maximal subgroup $M\cong S_4(4){:}2$.
Now in $\Aut(G)$ the normalizers of $H$ and $M$ are $S_5\wr 2$ and $S_4(4){:}4$
respectively. But $S_4(4){:}4$ does not contain $S_5\wr 2$, so the elements
of $\Aut(H)\setminus H$ interchange two $G$-conjugates of $M$ that contain $H$.
\end{proof}
The above examples constitute the extent of general knowledge at the time
of the publication of the Atlas.
 
\section{Doubly-deleted doubly-transitive permutation representations}
Both the examples given so far occur in sporadic groups $G$. There is also at least one
example in which $H$ is sporadic, but $G$ is a classical simple group.
\begin{theorem}
Let $G=\Omega_{10}^-(2)$ and
$H\cong \Mat_{12}$. Then $H$ is contained in exactly two other subgroups of $G$,
both lying in the single conjugacy class of subgroups isomorphic to $A_{12}$.
\end{theorem}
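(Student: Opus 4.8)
The plan is to combine the known list of maximal subgroups of $G=\Omega_{10}^-(2)$ with the fact that $\Mat_{12}$ has two inequivalent $2$-transitive (indeed $3$-transitive) actions on $12$ points. Recall first that for $n\equiv 0\pmod4$ the fully deleted $\FF_2$-permutation module of $S_n$ on $n$ points --- the sum-zero subspace of $\FF_2^n$ modulo the all-ones vector --- carries the $S_n$-invariant quadratic form induced from $Q(v)=\sum_{i<j}v_iv_j$; for $n=12$ this $10$-dimensional form has minus type, so $A_{12}<\Omega_{10}^-(2)=G$, forming a single conjugacy class of maximal subgroups, with $N_{\mathrm O_{10}^-(2)}(A_{12})=S_{12}\cong\Aut(A_{12})$ and $N_{S_{12}}(\Mat_{12})=\Mat_{12}$ (an element of $S_{12}$ normalising a transitive $\Mat_{12}$ and permuting the $12$ points cannot induce its outer automorphism, since that would interchange the two classes of point stabilisers). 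As $\Mat_{12}$ is $2$-transitive on $12$ points in two ways, interchanged by its outer automorphism $\alpha$, each action gives an embedding of $H\cong\Mat_{12}$ into $A_{12}<G$; fix the copy $H$ coming from the first action.

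The second step is to show that the only maximal subgroups of $G$ containing $H$ are conjugates of $A_{12}$. Going through the maximal subgroups of $\Omega_{10}^-(2)$: since $\mathrm{ord}_{11}(2)=10$ and $2^5\equiv-1\pmod{11}$, the prime $11$ divides $|\mathrm O_{10}^-(2)|$ but not the order of any parabolic, any non-degenerate subspace stabiliser $\mathrm O_k\perp\mathrm O_{10-k}$, or any imprimitive, tensor or field-extension subgroup; the only geometric maximal subgroup of order divisible by $11$ is $\mathrm{GU}_5(2)=3\times\mathrm U_5(2)$, which cannot contain $\Mat_{12}$ because $\Mat_{12}$ has no faithful $\overline{\FF_2}$-module of dimension below $10$; and of the remaining (almost simple) maximal subgroups only $A_{12}$ has order divisible by $|\Mat_{12}|=2^6\cdot3^3\cdot5\cdot11$ together with a suitable $10$-dimensional module. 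Since $\Mat_{12}$ is a maximal subgroup of $A_{12}$, the overgroups of $H$ in $G$ other than $G$ are precisely the conjugates of $A_{12}$ containing $H$; in particular $N_G(H)$ lies in one such conjugate, whence $N_G(H)=\Mat_{12}=H$.

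It remains to count these conjugates, for which I would use the identity $\#\{gA_{12}g^{-1}:H\le gA_{12}g^{-1}\}=\#\{A_{12}\text{-classes of }K\le A_{12}\text{ with }K\sim_G H\}$, valid because $N_G(A_{12})=A_{12}$ and $N_G(H)=H$. The $\Mat_{12}$-subgroups of $A_{12}$ fall into four $A_{12}$-classes --- the $S_{12}$-class of each of the two actions splits in two, since $N_{S_{12}}(\Mat_{12})\le A_{12}$. The two $10$-dimensional $\FF_2\Mat_{12}$-modules afforded by the two actions are $\alpha$-twists of one another and each admits a unique invariant quadratic form (of minus type), so the first-action copy $H$ is conjugate in $\mathrm O_{10}^-(2)$ --- and, crucially, in $G=\Omega_{10}^-(2)$ itself --- to a second-action copy lying in the same $A_{12}$; the conjugating element cannot normalise $A_{12}$ (conjugation inside $A_{12}$ cannot turn an action-one copy into an action-two copy), so it carries our fixed $A_{12}$ to a distinct conjugate also containing $H$. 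On the other hand the remaining two classes, obtained from these by conjugating with a transposition, are not $G$-conjugate to $H$: such a transposition lies in $\mathrm O_{10}^-(2)\setminus\Omega_{10}^-(2)$, and $N_{\mathrm O_{10}^-(2)}(\Mat_{12})=\Mat_{12}\le\Omega_{10}^-(2)$ leaves no room to correct its Dickson invariant. Thus exactly two of the four classes are $G$-conjugate to $H$, so $H$ lies in exactly two conjugates of $A_{12}$, and these together with $H$ and $G$ are its only overgroups. The step I expect to be the real obstacle is the assertion that the fusion of the two action-types of $\Mat_{12}$ inside $A_{12}$ already takes place in $\Omega_{10}^-(2)$ and not merely in $\mathrm O_{10}^-(2)$ --- a Dickson-invariant computation --- everything else being bookkeeping with the known subgroup structures of $\Omega_{10}^-(2)$, $A_{12}$ and $\Mat_{12}$.
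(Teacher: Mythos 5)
Your overall strategy (eliminate all maximal subgroups except the conjugates of $A_{12}$ using the prime $11$, then count those conjugates via the identity $c=\#\{M\text{-classes of }K\le M\text{ with }K\sim_G H\}$) is sound and more self-contained than the paper's, which simply quotes from \cite{maxaut} and \cite{BHRD} that $S_{12}$ and $\Mat_{12}{:}2$ are both maximal in $\Aut(G)=\mathrm{O}_{10}^-(2)$ and observes that $\Mat_{12}{:}2\not\le S_{12}$. But your execution of the counting step contains a genuine error. There are \emph{two}, not four, $A_{12}$-classes of $\Mat_{12}$ in $A_{12}$: the two $2$-transitive actions of the abstract group $\Mat_{12}$ do \emph{not} give two $S_{12}$-classes of subgroups. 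If $\phi$ is a transitive embedding with point stabilisers in one class of $\Mat_{11}$'s and $\alpha$ is the outer automorphism, then $\phi\circ\alpha$ is a transitive embedding with the \emph{same image} and point stabilisers in the other class; so ``action type'' is an invariant of an embedding but not of a subgroup, and there is a single $S_{12}$-class of transitive $\Mat_{12}$'s, splitting (as you correctly argue via $N_{S_{12}}(\Mat_{12})=\Mat_{12}$) into exactly two $A_{12}$-classes, interchanged by a transposition. This is the standard Atlas fact that $A_{12}$ has two classes of maximal $\Mat_{12}$, fused in $S_{12}$.

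The second, related, error is your assertion that $N_{\mathrm{O}_{10}^-(2)}(\Mat_{12})=\Mat_{12}$. You correctly prove $N_G(H)=H$, but the normaliser in the full orthogonal group is $\Mat_{12}{:}2$: the two fully deleted permutation modules are isomorphic as $\FF_2\Mat_{12}$-modules (the two degree-$11$ ordinary characters agree on every odd-order class, so their Brauer characters coincide), hence the module is $\alpha$-invariant and $\alpha$ is induced by an element $\tau\in \mathrm{O}_{10}^-(2)\setminus\Omega_{10}^-(2)$. Far from ``leaving no room to correct the Dickson invariant'', this $\tau$ is exactly what fuses the two $A_{12}$-classes inside $G$: for a transposition $t$, the product $\tau t$ lies in $\Omega_{10}^-(2)$ and conjugates $H$ into the other $A_{12}$-class, giving $j=2$ and hence $c=2$; equivalently, $\tau$ normalises $H$ but not $A_{12}$ (since $\Mat_{12}{:}2\not\le S_{12}$) and so carries the fixed $A_{12}$ to the second conjugate. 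Your two mistakes happen to cancel numerically (you fuse two of a phantom four classes instead of two of two), so you land on the correct answer, but the mechanism you describe --- fusion of ``action types'' inside a fixed $A_{12}$, blocked fusion across the transposition --- is the opposite of what actually happens, and the step you yourself flag as the crux would not survive the attempt to make it precise.
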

\begin{proof} 
In this case there is a crucial error in
\cite[p.~147]{Atlas} and one needs to use the corrected list of maximal subgroups of $G$
from \cite{ABC} or \cite{BHRD}. Note in particular that
$\Aut(G)\cong \Omega_{10}^-(2){:}2$ contains maximal subgroups $S_{12}$ and 
$\Aut(\Mat_{12})\cong \Mat_{12}{:}2$. Since
 $S_{12}$ does not contain $\Mat_{12}{:}2$, we have essentially the same
situation as in the two previous examples. The only maximal subgroups of $G$ that
contain $H$ are conjugates of $M\cong A_{12}$, and there are exactly two such
conjugates, swapped by elements of $\Aut(H)\setminus H$. 
\end{proof}
Analysing this 
example, 
it is clear that an important property
of $\Mat_{12}$ that is being used here is that it has two distinct $2$-transitive
representations on $12$ points, swapped by the outer automorphism. The smallest
simple group with such a property is $L_3(2)$, which has two distinct $2$-transitive
representations on $7$ points. In characteristic $7$, therefore, there is a
doubly-deleted permutation representation, giving rise to an embedding in 
$\Omega_5(7)$.

%
\begin{theorem}
Let $G=\Omega_5(7)$ and $H\cong L_3(2)\cong L_2(7)$ be a subgroup of $G$, acting
irreducibly on the $5$-dimensional module. Then $H$ is contained in exactly two
other subgroups of $G$, both isomorphic to $A_7$, and lying in the same
$G$-conjugacy class. 
\end{theorem}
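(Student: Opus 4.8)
The plan is to present this as a fourth instance of the same phenomenon, following the template of the three preceding theorems, with \cite{BHRD} (or \cite{ABC}) supplying the group-theoretic data in place of \cite{maxaut}. First I would set up the geometry. Since $L_2(7)\cong L_3(2)$ acts $2$-transitively on the $7$ points of the Fano plane, the permutation module over $\FF_7$ carries the radical chain $0<\langle\mathbf{1}\rangle<V_0<\FF_7^7$, where $V_0$ consists of the vectors of coordinate-sum zero and the all-ones vector $\mathbf 1$ lies in $V_0$ precisely because $7\mid 7$; the standard inner product of $\FF_7^7$ has radical $\langle\mathbf 1\rangle$ on $V_0$ and so induces a non-degenerate symmetric bilinear form on the $5$-dimensional quotient $V_0/\langle\mathbf 1\rangle$. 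This produces the chain $H<M<G$, with $M\cong A_7$ acting on $V_0/\langle\mathbf 1\rangle$ as the doubly-deleted permutation module and $H\cong L_2(7)$ acting on the same module as the fourth symmetric power $\mathrm{Sym}^4$ of the natural $\SL_2(7)$-module, hence irreducibly, as in the hypothesis. Because $A_7$ and $L_2(7)$ are simple, these embeddings land in $\Omega_5(7)$ and not merely in $\mathrm{SO}_5(7)$; here $M$ is maximal in $G$ and $H$, the stabiliser of a point, has index $15$ in $M$ and is maximal there, so everything reduces to counting the overgroups of $H$.

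The first substantive step is to show, from the corrected list of maximal subgroups of $\Omega_5(7)\cong\mathrm{PSp}_4(7)$ in \cite{BHRD}, that the only maximal subgroups of $G$ containing $H$ are the $G$-conjugates of $M$, and that these form a single $G$-class. One must be on guard here --- as the excerpt already cautions in the $\Omega_{10}^-(2)$ case --- against the unreliability of the Atlas. Since $H$ acts on the $5$-dimensional module irreducibly, primitively, not over a proper subfield and with no tensor factorisation, it lies in no maximal subgroup of geometric (Aschbacher $\mathcal C_i$) type; and among the $\mathcal S$-type maximal subgroups the only one of order divisible by $|H|=168$ that contains a copy of $H$ affording this module is $M\cong A_7$. (The irreducible $L_2(7)$ itself occurs in the $\mathcal S$-list but is not maximal, being contained in $A_7$.) From the same tables I would read off that $\Aut(G)\cong\mathrm{PGO}_5(7)$ has $N_{\Aut(G)}(M)\cong S_7$ (even and odd permutations acting on the doubly-deleted module) and $N_{\Aut(G)}(H)\cong PGL_2(7)$ (the image in $\mathrm{PGO}_5(7)$ of $\GL_2(7)$ acting on $\mathrm{Sym}^4$, whose centre collapses to scalars).

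The counting then runs exactly as in the preceding theorems. Since $S_7$ does not contain $PGL_2(7)$ (which has no subgroup of index less than $8$), we get $N_{S_7}(H)=H$, whence $N_{\Aut(G)}(H)\cap N_{\Aut(G)}(M)=H$; moreover $N_G(H)=H$ (as $PGL_2(7)\not\le G$, by the first step) and $N_G(M)=A_7$ (as $M$ is maximal). Choose $t\in N_{\Aut(G)}(H)\setminus H$; then $M^t\neq M$, for otherwise $t$ would lie in $N_{\Aut(G)}(M)=S_7$ and hence in $N_{S_7}(H)=H$. As there is a single $G$-class of $A_7$, $M$ and $M^t$ are $G$-conjugate, say $M^t=M^g$ with $g\in G$, and both contain $H$ (since $H^t=H$). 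Now $H^{g^{-1}}\le M$ must lie in the \emph{other} of the two $M$-classes of $L_2(7)$-subgroups of $M$ (each of size $15$): were it in the same class as $H$ we would obtain $g\in N_G(H)\,M=M$ and hence $M^t=M^g=M$, a contradiction. Hence both classes of $L_2(7)$ in $A_7$ are fused in $G$, so the number of $G$-conjugates of $M$ containing $H$ is $|N_G(H)|\cdot 30/|N_G(M)|=168\cdot 30/2520=2$ by the usual incidence count; thus $M$ and $M^t$ are the only two, and being maximal they exhaust the overgroups of $H$ other than $H$ and $G$. Finally the elements of $\Aut(H)\setminus H$ interchange them, exactly as in the three preceding theorems.

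The main obstacle is the first substantive step: extracting from \cite{BHRD} a provably complete list of maximal subgroups of $\Omega_5(7)$, verifying that no $\mathcal C_i$- or $\mathcal S$-type class other than $A_7$ contributes an extra maximal overgroup of $H$, and confirming that there is a single $G$-class of such $A_7$ with $N_{\Aut(G)}(A_7)\cong S_7$. A subsidiary point is to confirm that $N_{\Aut(G)}(H)$ really is all of $PGL_2(7)$ and not just $H$ --- equivalently, that the outer automorphism of $H$ is realised in $\Aut(G)$ but not in $G$ --- which, consistently with the overgroup structure just described, comes down to a spinor-norm computation placing $PGL_2(7)$ in $\mathrm{SO}_5(7)\setminus\Omega_5(7)$.
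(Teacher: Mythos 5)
Your proposal is correct and follows essentially the same route as the paper: both read off from \cite[Table 8.23]{BHRD} that $\Omega_5(7)$ has a single class of irreducible $L_3(2)$ contained in a single class of maximal $A_7$, while $SO_5(7)$ contains $L_3(2){:}2$ and $S_7$, so that the outer automorphism of $H$ is realised outside $G$ and swaps two $G$-conjugate copies of $A_7$ containing $H$. You simply make explicit the module-theoretic setup, the Aschbacher-class elimination, and the incidence count that the paper's proof leaves implicit.
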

\begin{proof}
Reading off the information about
irreducible subgroups of $\Omega_5(7)$ and $SO_5(7)$ 
from \cite[Table 8.23]{BHRD},
we see that $G=\Omega_5(7)$ has 
a single class of irreducible subgroups $H=L_3(2)$, and these subgroups are contained
in maximal subgroups $M=A_7$. Correspondingly, in $SO_5(7)$ there are maximal subgroups
$L_3(2){:}2$ and $S_7$. The outer automorphism of $L_3(2)$ therefore swaps two
($G$-conjugate) copies of $A_7$ containing $L_3(2)$. 
\end{proof}

More generally, for all $n\ge 3$ and all prime powers $q$, the simple group
$L_n(q)$ has two inequivalent $2$-transitive permutation
representations on $d:=(q^n-1)/(q-1)$ points. Not all of these give
rise to examples, however. The case $L_3(3) < A_{13} < \Omega_{11}(13)$
can be analysed using the classification of maximal subgroups of orthogonal
groups in $11$ dimensions in \cite{BHRD}, where we find that $\Omega_{11}(13)$ contains
two classes of $L_3(3){:}2$, so that $L_3(3)$ embeds in both $A_{13}$ and
$L_3(3){:}2$. Similarly, the cases
$L_4(2) < A_{15} < \Omega_{13}(\ell)$ for $\ell=3,5$ are described in
\cite{AKS}. There is one class of $A_8$, and two classes of $S_{15}$, 
in $\Omega_{13}(3)$, so $A_8$ is not second maximal in this case.
There is one class of $A_{15}$, and two classes of $S_8$, in $\Omega_{13}(5)$,
so $A_8$ is contained in three maximal subgroups in this case.

\section{Infinite series of examples}
If $p$ is a prime bigger than $7$, then there is an
embedding  $L_3(2)<A_7<\Omega_6^\varepsilon(p)$, 
where $\varepsilon=+$ just when $p$ is a quadratic residue modulo $7$.
For simplicity, restrict to the case $\varepsilon=+$.
We read off the following properties from \cite[Table 8.9]{BHRD}.
The number of classes of 
$A_7$ is at least $2$, and is exactly $2$ when $p\equiv 3\bmod 4$.
The same is true for $L_3(2)$. In this case, the centre of $\Omega_6^+(p)$ is trivial,
and the outer automorphism group has order $4$, consisting of a diagonal automorphism
$\delta$, a graph automorphism $\gamma$, and their product $\delta\gamma$.
Now $A_7$ is normalized by $\gamma$ in all cases, while $L_3(2)$ is normalized
by $\gamma$ provided $p\equiv \pm1\bmod 8$, and by $\delta\gamma$ otherwise.
Thus we must restrict to the case $p\equiv 7\bmod 8$, and $p\equiv 1,2,4\bmod 7$,
that is $p\equiv 15,23,39\bmod 56$. In these cases, the group 
$\Omega_6^+(p).\langle\gamma\rangle=SO_6^+(p)$ contains two classes of $S_7$,
and two classes of $L_3(2){:}2$. The automorphism $\delta$ swaps the two classes of
$S_7$, and swaps the two classes of $L_3(2){:}2$.

\begin{theorem}
Let $p$ be a prime, 
and suppose that $p\equiv 15,23,39\bmod 56$. Let $G$ be the
simple group $\Omega_6^+(p)\cong PSL_4(p)$. Then
$G$ contains subgroups $L_3(2)<A_7$, both normalized by the tranpose-inverse
automorphism of $L_4(p)$, to $L_3(2){:}2$ and $S_7$ respectively.
In particular, every such $L_3(2)$ lies in exactly two copies of $A_7$,
and these two copies of $A_7$ are $G$-conjugate.
\end{theorem}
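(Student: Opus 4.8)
The plan is to assemble the facts about $G=\Omega_6^+(p)\cong PSL_4(p)$ recorded above from \cite[Table 8.9]{BHRD} and then run the normalizer argument of the earlier theorems, with one additional counting step. First I would translate the hypothesis: by the Chinese Remainder Theorem, $p\equiv 15,23,39\bmod 56$ is equivalent to $p\equiv 7\bmod 8$ together with $p\equiv 1,2,4\bmod 7$. Since $\{1,2,4\}$ are exactly the nonzero squares modulo $7$, we are in the case $\varepsilon=+$, so $A_7<G$; and since $p\equiv 3\bmod 4$ and $p\equiv -1\bmod 8$, the properties quoted above apply, giving exactly two $G$-classes of $A_7$ and exactly two of $L_3(2)$, each of these four classes stabilised by the transpose-inverse automorphism $\gamma$ (so that in $SO_6^+(p)=G.\langle\gamma\rangle$ they appear as $S_7$ and $L_3(2){:}2\cong PGL_2(7)$ respectively), while the diagonal automorphism $\delta$ interchanges the two classes of $S_7$ and the two classes of $L_3(2){:}2$.

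Next I would fix one of the irreducible subgroups $H\cong L_3(2)$, for which $V$ is the deleted permutation module of one of the two $2$-transitive actions of $L_3(2)$ on $7$ points and is absolutely irreducible over $\FF_p$ since $p>7$. Inspecting the list of maximal subgroups of $G$ in \cite[Table 8.9]{BHRD}, I would check that no maximal subgroup other than a conjugate of $A_7$ can contain $H$: the reducible class is out since $H$ is irreducible; the imprimitive and field-extension classes are out since $L_3(2)$ has no proper subgroup of index at most $6$ and, by absolute irreducibility, $V$ has no nontrivial $\FF_{p^k}$-structure; class $\mathcal{C}_4$ is out because $L_3(2)$ has no $2$-dimensional projective representation and class $\mathcal{C}_8$ because it has no self-dual $4$-dimensional projective representation; $\mathcal{C}_6$ and $\mathcal{C}_7$ are empty since $6$ is neither a prime power nor a proper power; and among the $\mathcal{S}$-subgroups $U_4(2)$ is out because $7\nmid|U_4(2)|$. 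So the maximal overgroups of $H$ in $G$ are exactly the conjugates of $M\cong A_7$ containing $H$, and since $L_3(2)$ is maximal in $A_7$ it remains only to count these. For this I would use that a fixed $M\cong A_7$ contains precisely two $A_7$-classes of $L_3(2)$, each of size $15$, which are fused in $N_{SO_6^+(p)}(M)\cong S_7$; since $\gamma$ fixes each $G$-class of $L_3(2)$, both of these $M$-classes lie in the single $G$-class of $H$, so $M$ contains exactly $30$ $G$-conjugates of $H$. The standard double count of incident pairs (conjugate of $H$, conjugate of $M$) then gives $|N_G(H)|\cdot 30/|N_G(M)|=168\cdot 30/2520=2$ for the number of conjugates of $M$ that contain $H$.

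Finally, writing $M,M'$ for these two overgroups, I would show that they are conjugate in $G$ and are interchanged by $\gamma$, exactly as in the previous theorems. By the facts recorded above, $N_{\Aut(G)}(H)=L_3(2){:}2\cong PGL_2(7)$ and $N_{\Aut(G)}(M)=S_7$. The group $PGL_2(7)$ permutes $\{M,M'\}$; its subgroup $H$ fixes both (being contained in both), and if the outer element $\tau$ of $PGL_2(7)$ also fixed $M$ then $\langle H,\tau\rangle=PGL_2(7)$ would lie in $N_{\Aut(G)}(M)=S_7$, contradicting $PGL_2(7)\not\le S_7$; this last non-containment holds because the two classes of $L_3(2)$ in $A_7$ are fused in $S_7$, so that $N_{S_7}(L_3(2))=L_3(2)$. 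Hence $\tau$ interchanges $M$ and $M'$. Since $\tau\in SO_6^+(p)=G.\langle\gamma\rangle$ and this group does not fuse the two $G$-classes of $A_7$ (it contains two classes of $S_7$), the subgroups $M$ and $M'=M^\tau$ lie in the same $G$-class and so are conjugate in $G$; this gives the theorem, the same argument applying to either of the two $G$-classes of $H$. I expect the main effort to lie in the middle step: one must verify against \cite[Table 8.9]{BHRD} that nothing but $A_7$ can contain an irreducible $L_3(2)$, and --- the point most easily missed --- one must remember that a single copy of $A_7$ accounts for two $G$-classes' worth of subgroups $L_3(2)$, fused only in $S_7$, which is precisely what makes the count come out as $2$ rather than $1$.
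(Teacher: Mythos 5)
Your proof is correct and is essentially the paper's own argument written out in full: the paper's entire published proof is the observation that elements of the coset $G\gamma$ normalize both $A_7$ (to $S_7$) and $L_3(2)$ (to $L_3(2){:}2$), so that products of two such coset elements are inner and fuse the two $A_7$-classes of $L_3(2)$ inside a fixed $A_7$, and symmetrically fuse the two overgroups $A_7$ of a fixed $L_3(2)$; your fusion step and your $\tau$-argument are exactly this, while the congruence translation, the elimination of other maximal overgroups, and the explicit double count are details the paper leaves implicit. Two small points. First, your count gives the number of conjugates of $M$ containing $H$, whereas the theorem needs that $H$ lies in no $A_7$ of the \emph{other} $G$-class; this follows at once from your own setup (apply $\delta$, which swaps the two classes of $A_7$ and the two classes of $L_3(2)$ simultaneously, to the statement that all $30$ subgroups $L_3(2)$ of $M$ lie in the class of $H$), but it should be said. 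Second, your closing remark that a single $A_7$ accounts for ``two $G$-classes' worth'' of subgroups $L_3(2)$, fused only in $S_7$, should read ``two $A_7$-classes' worth'': as you correctly prove earlier in the same paragraph, all $30$ lie in a single $G$-class, and if they genuinely split between the two $G$-classes the double count would give $1$ rather than $2$ and the theorem would fail.
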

\begin{proof}
Consider a pair of subgroups $L_3(2)<A_7$ of $G$, and adjoin $\alpha\gamma$,
where $\alpha$ is an inner automorphism of $\Omega_6^+(p)$, to extend
$A_7$ to $S_7$. This swaps the two classes of $L_3(2)$ in $A_7$. But we can also
adjoin $\beta\gamma$, where $\beta$ is another inner automorphism, to normalize
$L_3(2)$ to $L_3(2){:}2$. Hence there is an inner automorphism of the form
$\alpha\gamma\beta\gamma$, that conjugates an $L_3(2)$ of one class in $A_7$,
to an $L_3(2)$ of the other class. The same argument with the roles of $L_3(2)$
and $A_7$ reversed shows that the two copies of $A_7$ in which $L_3(2)$ lies
are conjugate in $\Omega_6^+(p)$.
\end{proof}
As a consequence, we have an infinite series of groups $PSL_4(p)$, for $p$ any prime
with $p\equiv
15,23,39 
\bmod 56$, for which Aschbacher's question has a negative answer.
There is a similar infinite series of groups $PSU_4(p)$, for $p\equiv1\bmod8$ and
$p\equiv 3,5,6\bmod7$, that is
$p\equiv 
17,33,41 
\bmod 56$.
This can be read off in a similar way from \cite[Table 8.11]{BHRD}.
\begin{theorem}
Let $p$ be a prime, 
and suppose that $p\equiv 17,33,41\bmod 56$.
Let $G$ be the simple groups $\Omega_6^-(p)\cong PSU_4(p)$. Then $G$ contains
subgroups $L_3(2)<A_7$, both normalized by the field automorphism of $U_4(p)$,
to $L_3(2){:}2$ and $S_7$ respectively. In particular, every such $L_3(2)$
lies in exactly two copies of $A_7$, and these two copies of $A_7$
are conjugate in $G$. 
\end{theorem}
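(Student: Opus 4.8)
The plan is to repeat the proof of the previous theorem almost word for word, now with $\Omega_6^-(p)$ in place of $\Omega_6^+(p)$ and with the subgroup data read off \cite[Table 8.11]{BHRD} instead of Table 8.9. First I would extract from that table the facts underlying the first sentence of the statement, valid in the range $p\equiv 17,33,41\bmod 56$, equivalently $p\equiv1\bmod8$ together with $p$ a non-residue modulo $7$: the simple group $G=\Omega_6^-(p)\cong PSU_4(p)$ has $\mathrm{Out}(G)=\langle\delta\rangle\times\langle\gamma\rangle$ of order $4$, with $\delta$ diagonal and $\gamma$ the field automorphism of $U_4(p)$; the non-residue condition modulo $7$ is exactly what makes the relevant $6$-dimensional representation of $A_7$ carry a form of minus type, so that $A_7<G$; there are at least two $G$-classes of each of $A_7$ and $L_3(2)$, and exactly two in the present range; $\gamma$ normalizes $A_7$ in every case, while, just as in the linear case, the congruence modulo $8$ decides whether it is $\gamma$ or $\delta\gamma$ that normalizes $L_3(2)$, the case $p\equiv1\bmod8$ being the one we need; and in $\Omega_6^-(p).\langle\gamma\rangle=SO_6^-(p)$ there are two $G$-classes each of $S_7$ and of $L_3(2){:}2$, with $\delta$ interchanging the two classes of $S_7$ and interchanging the two classes of $L_3(2){:}2$. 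This establishes the first assertion of the theorem, that $L_3(2)<A_7<G$ with both normalized by $\gamma$, to $L_3(2){:}2$ and to $S_7$ respectively.

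Granting this data, the remaining claim is a formal consequence, identical to the linear case. Fix a pair $L_3(2)<A_7$ in $G$. Since $\gamma$ normalizes $A_7$, extending it to $S_7$, there is an inner automorphism $\alpha$ of $G$ such that $\alpha\gamma$ normalizes $A_7$ and induces its outer automorphism; it therefore interchanges the two $A_7$-classes of $L_3(2)$ contained in $A_7$. Since $\gamma$ also normalizes $L_3(2)$, there is an inner automorphism $\beta$ of $G$ such that $\beta\gamma$ normalizes this chosen $L_3(2)$. Taking $\gamma$ to be an involution, $\alpha\gamma\beta\gamma=\alpha(\gamma\beta\gamma)$, and $\gamma\beta\gamma$ is again inner since $\mathrm{Inn}(G)$ is normal in $\Aut(G)$; hence $\alpha\gamma\beta\gamma$ is a product of two inner automorphisms of $G$, so is itself inner, and it carries the chosen $L_3(2)$ from one of the two $A_7$-classes inside $A_7$ to the other. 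Thus these two classes lie in a single $G$-class. The same argument with the roles of $L_3(2)$ and $A_7$ interchanged shows that the copies of $A_7$ containing a fixed $L_3(2)$ all lie in one $G$-class. Finally, since $L_3(2)$ is maximal in $A_7$ and, by the table, the only maximal subgroups of $G$ containing an irreducible $L_3(2)$ are conjugates of $A_7$, every overgroup of such an $L_3(2)$ in $G$ equals one of its $A_7$; counting incident pairs $(L_3(2),A_7)$ over the two $G$-classes, exactly as in the $\Omega_5(7)$ example, shows there are precisely two such $A_7$. Together with the conjugacy just established, this is the stated conclusion.

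The only step at which anything can genuinely go wrong is the first paragraph: reading from \cite[Table 8.11]{BHRD} the exact class numbers and the exact action of $\delta$ and $\gamma$ on the classes of $A_7$, $S_7$, $L_3(2)$ and $L_3(2){:}2$, and confirming that $p\equiv 17,33,41\bmod 56$ are precisely the residues for which simultaneously the embedding is of minus type, there are exactly two $G$-classes of each of $A_7$ and $L_3(2)$, and $\gamma$ (and not $\delta\gamma$) normalizes $L_3(2)$. Once that bookkeeping is complete, the fusion and counting arguments need no change from the case $\Omega_6^+(p)\cong PSL_4(p)$ already treated.
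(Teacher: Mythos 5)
Your proposal is correct and matches the paper's intention exactly: the paper gives no separate proof of this theorem, saying only that it ``can be read off in a similar way from [Table 8.11]'' of Bray--Holt--Roney-Dougal, i.e.\ by repeating the $\Omega_6^+(p)\cong PSL_4(p)$ argument verbatim, which is precisely what you do. Your added details (the explicit reason $\alpha\gamma\beta\gamma$ is inner, and the counting of exactly two overgroups $A_7$) are correct elaborations of steps the paper leaves implicit.
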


There is an analogous embedding $L_2(11)<A_{11}$,
which one might think gives similar series of examples in
$\Omega_{10}^\varepsilon(p)$ for certain $p$. 
However, $L_2(11)$ is not maximal in $A_{11}$, so this fails.

\section{More special examples}
As we have just seen,
the embedding $L_3(2)<A_7$ behaves differently in characteristic $7$ 
(the \emph{special} case) from
other characteristics (the \emph{generic} case). 
More generally, the embedding $L_n(q)<A_d$, where
$d=(q^n-1)/(q-1)$, behaves differently in 
the special case (characteristic dividing $d$),
compared to the generic case (characteristic prime to $d$). 

The special case is easiest to analyse when $d$ is itself prime.
In this case, $n$ is necessarily prime, but $q$ need not be prime.
This includes all Mersenne primes except $3$, and others such as $(3^3-1)/(3-1)=13$
and
$(5^3-1)/(5-1)=31$, for example.
We then have embeddings $L_n(q) < A_d < \Omega_{d-2}(d)$. The Singer cycles
in $L_n(q)$ are represented as $d$-cycles in
$A_d$, and as regular unipotent elements in $\Omega_{d-2}(d)$.
Now there is a unique class of regular unipotent elements in $SO_m(d)$ for all odd $m$,
and these elements have order $d$ provided $m\le d$. The class splits into two
classes in $\Omega_m(d)$, and these classes are rational if $m\equiv \pm1\bmod 8$,
and irrational otherwise.

Since $d$ is prime, the $d$-cycles in $S_d$ 
split into two irrational classes in $A_d$ (by Sylow's Theorem).
The $d$-cycles are conjugate in $A_d$ to their inverses just when $d\equiv 1\pmod 4$.
Since the regular unipotent elements have unipotent centralizer, it follows that
they are conjugate in $\Omega_{d-2}(d)$ to 
their inverses if and only if either $d\equiv 1\bmod 4$ or $d-2\equiv \pm1 \bmod 8$,
that is $d\equiv 1,3,5\bmod 8$.
Now the Singer cycles in $L_n(q)$ are inverted by the transpose-inverse
automorphism, and we want this automorphism to be realised by an element
of $SO_{d-2}(d)\setminus \Omega_{d-2}(d)$. This happens if and only if
$d\equiv 7\bmod 8$.

\begin{theorem}
If $q$ is a prime power, and $d:=(q^n-1)/(q-1)$ is prime, with $d\equiv 7\bmod 8$,
let $H=P\Gamma L_n(q)$, $M=A_d$ and $G=\Omega_{d-2}(d)$. Then $H<M<G$, 
and $H$ and $M$ are
unique up to conjugacy in $G$. Hence $H$ and $M$ extend to $H.2$ and $M.2$ in $G.2$,
and $H$ is contained in exactly two $G$-conjugates of $M$. 
\end{theorem}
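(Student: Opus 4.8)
The plan is to follow the route of the earlier theorems, with the Atlas lookups replaced by cross-characteristic representation theory; there are three things to establish --- the chain $H<M<G$, that $H$ and $M$ each form a single $G$-conjugacy class, and that $H$ lies in exactly two conjugates of $M$. For the chain, observe that $S_d$ acts on the permutation module $\FF_d^{\,d}$ preserving $\sum x_i^2$; since the characteristic $d$ divides $d$, the all-ones vector lies in the sum-zero submodule $W_0$, so the ``heart'' $V:=W_0/\langle(1,\dots,1)\rangle$ is $(d-2)$-dimensional, nondegenerate, and of orthogonal type, and $S_d$ acts on it. As $A_d$ is perfect it has trivial determinant and spinor norm on $V$, so $A_d\le\Omega_{d-2}(d)$, i.e. $M\le G$; and $P\Gamma L_n(q)$ permutes the $d$ points of $PG(n-1,q)$, an even action (the hypotheses force $n$ to be an odd prime), so $P\Gamma L_n(q)\le A_d$, i.e. $H\le M$. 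Restricting $V$ gives $H<M<G$. The remaining orthogonal bookkeeping --- that the transpose-inverse automorphism of $L_n(q)$ and the transposition-twisted copy of $S_d$ inside $N_{SO_{d-2}(d)}(A_d)$ have nontrivial spinor norm, so lie in $SO_{d-2}(d)\setminus\Omega_{d-2}(d)$ but not in $\Omega_{d-2}(d)$ --- is exactly the analysis of the preceding section, and is where the hypothesis $d\equiv 7\bmod 8$ enters; I would simply quote it.

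For uniqueness, I would invoke modular representation theory: in characteristic $d$ the heart $V$ is irreducible and is the unique $(d-2)$-dimensional irreducible $\FF_d A_d$-module, and equally the unique $(d-2)$-dimensional irreducible $\FF_d L_n(q)$-module (the point and hyperplane hearts coincide since $V$ is self-dual). Hence any two subgroups acting through this module are $GL_{d-2}(d)$-conjugate; self-duality together with $d-2$ odd --- so that every orthogonal similitude factor is a square --- sharpens this to a single $O_{d-2}(d)$-class; and it is a single $\Omega_{d-2}(d)$-class because $N_{O_{d-2}(d)}(A_d)$ already surjects onto $O_{d-2}(d)/\Omega_{d-2}(d)\cong C_2\times C_2$, the odd permutations contributing the determinant coset and the twisted $S_d$ (by the previous section) the spinor coset --- and likewise for $H$, with the transpose-inverse automorphism playing the role of the spinor-coset element. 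Computing $N_{O_{d-2}(d)}(A_d)$ and $N_{O_{d-2}(d)}(P\Gamma L_n(q))$ and intersecting with $SO_{d-2}(d)$ and $\Omega_{d-2}(d)$ then gives $N_{G.2}(M)\cong S_d$, $N_{G.2}(H)\cong\Aut(L_n(q))=P\Gamma L_n(q).2$, $N_G(M)=M$, and $N_G(H)=H$. Since $M$, and hence $H$, forms a single $G$-class it forms a single $G.2$-class, so these normalizers are the asserted extensions $M.2\le G.2$ and $H.2\le G.2$.

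The count then goes as in the earlier theorems. First, $\Aut(M)\cong S_d$ does not contain $\Aut(H)\cong\Aut(L_n(q))$, because the graph (transpose-inverse) automorphism of $L_n(q)$ interchanges the points and hyperplanes of $PG(n-1,q)$ and so is induced by no permutation of the $d$ points; thus $\Aut(H)\cap\Aut(M)=H$ and the $\Aut(H)$-orbit of $M$ among the conjugates of $M$ has length $[\Aut(H):H]=2$. These are all such conjugates: if $g\in G$ with $H^g\le M=A_d$, then $H^g$ acts faithfully on the $d$ points (its socle $L_n(q)$ being simple), and since the minimal faithful permutation degree of $L_n(q)$ is $d$ this action is $2$-transitive, so $H^g$ --- like $H$ --- is a point-stabilizer-type copy of $P\Gamma L_n(q)$; all such form a single $\mathrm{Sym}(d)$-class, so $H^g$ and $H$ are conjugate in $\mathrm{Sym}(d)$ and therefore, $-I$ being central in $GL_{d-2}(d)$, in $\Aut(M)$, and a routine double-coset manipulation puts $M^g$ in the $\Aut(H)$-orbit of $M$. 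Hence $H$ lies in exactly two conjugates of $M$, interchanged by $\Aut(H)\setminus H$, and they are $G$-conjugate because $M$ is a single class.

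The step I expect to be the main obstacle is uniqueness --- precisely, the representation-theoretic facts that replace the Atlas: that in characteristic $d$ the doubly-deleted permutation module is irreducible and is the \emph{only} $(d-2)$-dimensional irreducible module for each of $A_d$ and $L_n(q)$. Were a second such module to exist, one could not exclude a second $G$-class of $M$ or of $H$, and the count above would collapse. By contrast the orthogonal and spinor-norm bookkeeping has been carried out in the preceding section, and the concluding orbit count is purely formal.
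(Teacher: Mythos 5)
Your proposal is correct, and it is best described as a fully written-out version of an argument the paper only sketches: the theorem in the paper carries no proof environment at all, being presented as a summary of the two preceding paragraphs, which derive the condition $d\equiv 7\bmod 8$ by tracking the single class of elements of order $d$ (Singer cycles $=$ $d$-cycles $=$ regular unipotent elements) through $A_d<S_d$ and $\Omega_{d-2}(d)<SO_{d-2}(d)$ and comparing the rationality of the resulting classes. You quote exactly that analysis for the spinor-norm bookkeeping, which is legitimate, and your construction of the chain $H<M<G$ via the heart of the permutation module is the same ``doubly-deleted'' mechanism the paper uses throughout. Where you go beyond the paper is in making explicit the two ingredients it takes for granted: (i) the uniqueness claims, which you correctly reduce to the statement that in characteristic $d$ the $(d-2)$-dimensional heart is the \emph{only} irreducible $\FF_dA_d$-module and the only irreducible $\FF_dL_n(q)$-module of that dimension --- this is true (it is James's theorem on low-dimensional modular representations of $S_n$, and the Landazuri--Seitz/Guralnick--Tiep bounds for $L_n(q)$ in cross characteristic), but it does need those citations, and you are right to flag it as the load-bearing step; and (ii) the final count, where your observation that any $G$-conjugate of $H$ inside $M$ must act transitively (orbit sizes are $1$ or at least $d$) and hence $2$-transitively, so that the relevant subgroups form one $S_d$-class splitting into two $A_d$-classes, is the correct way to close the double-coset computation that the paper's earlier proofs perform implicitly. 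The one point you wave at too quickly is the containment $P\Gamma L_n(q)\le A_d$: evenness of $PGL_n(q)$ follows from the Singer $d$-cycle being even ($d$ odd) together with $PGL_n(q)=PSL_n(q)\langle s\rangle$, but the parity of the field automorphisms when $q$ is a proper prime power is not settled by ``$n$ is an odd prime''; this is a gap in the theorem statement itself rather than in your argument, and is vacuous for every example the paper actually exhibits, all of which have $q$ prime or $q=2$.
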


The condition $d\equiv 7\bmod 8$ is satisfied by all Mersenne primes
(the case $q=2$), except $3$, but not
by all primes of the form $(q^n-1)/(q-1)$. The condition can be re-written as a
condition on the values of $q$ and $n$ modulo $8$.
\begin{lemma}
If $d=(q^n-1)/(q-1)$, then the condition $d\cong 7\bmod 8$ is equivalent
to the condition that, either
\begin{itemize}
\item $q=2$ and $n>2$, or
\item $q\equiv 1\bmod 8$ and $n\equiv 7\bmod 8$, or
\item $q\equiv 5\bmod 8$ and $n\equiv 3\bmod 8$.
\end{itemize}
\end{lemma}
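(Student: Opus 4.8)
The plan is to reduce the identity $d=1+q+q^2+\cdots+q^{n-1}$ modulo $8$ and run a short case analysis on $q\bmod 8$ and on the parity of $n$. Throughout, recall that $q$ is a prime power, so if $q$ is even it is a power of $2$; since $2^1\equiv 2$, $2^2\equiv 4$, and $2^k\equiv 0\bmod 8$ for $k\ge 3$, the even case splits into just $q=2$, $q=4$, and $q\equiv 0\bmod 8$. For $q=2$ one has $d=2^n-1$, which is $\equiv 7\bmod 8$ precisely when $n\ge 3$; this yields the first bullet. For $q=4$ or $q\equiv 0\bmod 8$ one has $q^2\equiv 0\bmod 8$, hence $d\equiv 1+q\bmod 8$ when $n\ge 2$ (and $d=1$ when $n=1$), so $d$ is congruent to $5$ or $1$ modulo $8$ and never to $7$ --- consistent with these values of $q$ being absent from the list.

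It remains to treat odd $q$. Here I would first note that each power $q^i$ is odd, so $d\equiv n\bmod 2$; thus $d\equiv 7\bmod 8$ forces $n$ odd, say $n=2m+1$. Using the elementary fact that $q^2\equiv 1\bmod 8$ for odd $q$, and pairing consecutive terms of the geometric sum, one obtains $d\equiv m(1+q)+1\bmod 8$.

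Now I would run through the four odd residues of $q$, equivalently the values $1+q\equiv 2,4,6,0\bmod 8$. When $1+q\equiv 4$ or $1+q\equiv 0$, one gets $d\equiv 4m+1\bmod 8$ or $d\equiv 1\bmod 8$, so $d$ lies in $\{1,5\}\bmod 8$ and never equals $7$ --- this rules out $q\equiv 3,7\bmod 8$, again matching the list. When $1+q\equiv 2$ (that is, $q\equiv 1\bmod 8$) one gets $d\equiv 2m+1\bmod 8$, which equals $7$ exactly when $m\equiv 3\bmod 4$, i.e.\ $n=2m+1\equiv 7\bmod 8$: the second bullet. When $1+q\equiv 6$ (that is, $q\equiv 5\bmod 8$) one gets $d\equiv 6m+1\bmod 8$, which equals $7$ exactly when $m\equiv 1\bmod 4$, i.e.\ $n=2m+1\equiv 3\bmod 8$: the third bullet.

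Collecting the cases gives the stated equivalence. I do not anticipate any genuine obstacle; the only point to watch is that the hypothesis that $q$ is a prime power is really being used to eliminate even $q$ other than $2$ (without it, e.g.\ $q=10$, $n=3$ would give $d=111\equiv 7\bmod 8$), so I would invoke that hypothesis explicitly at the start.
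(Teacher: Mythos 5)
Your proof is correct, and in fact the paper states this lemma without any proof at all, treating it as a routine congruence computation; your case analysis is exactly the verification one would supply. The key reduction $d\equiv 1+m(1+q)\bmod 8$ for odd $q$ and $n=2m+1$ (via $q^2\equiv 1\bmod 8$) is right, the four odd residues of $q$ then give precisely the second and third bullets, the even prime powers reduce to $q\in\{2,4\}$ or $q\equiv 0\bmod 8$ as you say, and your observation that the prime-power hypothesis is genuinely needed (e.g.\ $q=10$, $n=3$ gives $d=111\equiv 7\bmod 8$) is a worthwhile remark that the paper does not make.
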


Only finitely many primes $d$ of the form $(q^n-1)/(q-1)$ are known,
but it is conjectured that there are infinitely many, including
infinitely many Mersenne primes $2^n-1$. Currently just $50$ Mersenne primes
are known, giving rise to examples with $H$ isomorphic to $L_3(2)$,
$L_5(2)$, $L_7(2)$, $L_{13}(2)$, \ldots, $L_{77232917}(2)$.
Less effort has been expended on finding primes for larger values of $q$,
but examples for $q=5$ 
and $n\equiv 3\bmod 8$
occur when $n=3$, $11$, $3407$, $16519$, $201359$ and $1888279$
(see A004061 in the On-line Encyclopedia of Integer Sequences \cite{OEIS}).
I could find no examples with $q=9$ or $q=13$, but using GAP \cite{GAP},
one can easily find the examples $n=7$, $47$ and $71$ for $q=17$
and $n\equiv 7\bmod 8$.

One can also search for examples by fixing $n$ rather than $q$.
For $n=3$, examples with $q\equiv 5\bmod 8$ and $d$ prime include
$q=5, 101, 173, 293, 677, 701, 773$. A search with $n=7$ turns up the examples 
$q=17, 73,89,353,1297,1409,1489,1609$, $1753$, $2609$, $2753,3673,4049,4409$, etc.,
and similarly for $n=11$, we can take $q=53,229,389, 709, 1213, 2029, 5581,
5669, 5813, 5861, 7229
$.
For $n=19$, there are examples for $q=181,277,389,509, 797, 1693, 1709$, etc.
For $n=23$, $q=113$, $257$, $857$, $1801$; for $n=31$, $q=241$, and so on.

In particular, examples of negative answers to Aschbacher's question
arise in the cases of $L_3(5)$, $L_3(101)$, $L_{11}(5)$, $L_7(17)$,
and $L_7(73)$. An extremely large example arises from the embedding of
$L_{77232917}(2)$ in $A_d$ and $\Omega_{d-2}(d)$, where 
$d=2^{77232917}-1$ is the largest currently known Mersenne prime.

Note that our analysis shows that the case $L_3(3)<A_{13}<\Omega_{11}(13)$ 
does not provide an example. This can also been seen from the list
of maximal subgroups of $\Omega_{11}(q)$ given in 
\cite[Table 8.75]{BHRD}. 
%

\section{More generic examples}
As we have seen, for all $n\ge 3$ and for all $q$,
the simple groups $L_n(q)$ 
have two inequivalent
permutation representations on $d:=(q^n-1)/(q-1)$ points,
 and hence we obtain
two inequivalent embeddings in $A_d$. 
In the generic case, when $\ell$ is a prime
not dividing $d$, the alternating group $A_d$
embeds irreducibly into $\Omega_{d-1}(\ell)$. 
However, the conditions on $n,q,\ell$ for this to give rise to a negative
answer to Aschbacher's question, are subtle and complicated, as we already
saw for the smallest case, $n=3,q=2$.

The next smallest case is $n=3,q=3$. To analyse this case, that is,
the embedding $L_3(3)<A_{13}<P\Omega_{12}^\pm(p)$, we may use 
the information on maximal subgroups of
$\Omega_{12}^\pm(p)$ provided in \cite[Tables 8.83 and 8.85]{BHRD}. 
It follows from these tables that
there are no examples here.

The next smallest case is $n=4,q=2$, and the embedding of $L_4(2)\cong A_8$
into $A_{15}$ and thence into
orthogonal groups in dimensions $13$ and $14$. The maximal subgroups of these
orthogonal
groups have been determined by Anna Schroeder, in her St Andrews PhD thesis \cite{AKS}.
In particular, the embeddings 
into $\Omega_{13}(3)$ and $\Omega_{13}(5)$ do not give examples. 
In the dimension $14$ case,
however, it seems that there is a crucial error at exactly the point that
interests us here: maximal subgroups $S_8$ are eliminated from the lists 
of maximal subgroups of $P\Omega_{14}^\pm(p)$ by the
assertion, in \cite[Propn. 6.4.17(iv)]{AKS}, 
that $S_8\le S_{15}$, which is manifestly false for this embedding.
Indeed, Propositions 6.4.4 and 6.4.5 in \cite{AKS} give the true picture,
and show that $S_8$ is indeed a maximal subgroup of $SO_{14}^\varepsilon(p)$
for suitable congruences of $\varepsilon$ and $p$.
In the cases when the outer
automorphism group of $\Omega_{14}^\varepsilon(p)$ is just $2^2$, 
the calculations are quite straightforward. These are the cases 
when $\varepsilon p\equiv 3\bmod 4$.

\begin{theorem}
Let $p\equiv 19,23,31,47\bmod 60$, and let $G=\Omega_{14}^+(p)$. Let $H\cong A_8$
be a subgroup of $G$ acting irreducibly in the $14$-dimensional representation.
Then $H$ is contained in exactly two maximal subgroups of $G$, both isomorphic to
$A_{15}$, and conjugate to each other in $G$. 
\end{theorem}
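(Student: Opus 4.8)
The plan is to run the argument of the preceding theorem almost verbatim, with $\Omega_6^+(p)$ replaced by $\Omega_{14}^+(p)$, the subgroup $L_3(2)$ by $A_8\cong L_4(2)$, and $A_7$ by $A_{15}$, feeding in the corrected subgroup data of \cite{AKS}. First I would check that the stated congruences are exactly the ones needed. Since $p\equiv 3\bmod 4$, the group $G=\Omega_{14}^+(p)=P\Omega_{14}^+(p)$ is simple with trivial centre, and its outer automorphism group has order $4$, generated by a diagonal automorphism $\delta$ and a graph automorphism $\gamma$. The $14$-dimensional deleted permutation module for $S_{15}$ over $\FF_p$ carries a quadratic form of Gram determinant $15$, hence is of $+$-type exactly when $-15$ is a square modulo $p$; among primes with $p\equiv 3\bmod 4$ and $p\nmid 15$ this holds precisely for $p\equiv 19,23,31,47\bmod 60$. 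So the congruences guarantee that $S_{15}$, and with it $A_{15}$ and the irreducibly-acting $A_8$ (the heart of the permutation module of $L_4(2)$ on the $15$ points of $PG(3,2)$), do embed in the orthogonal groups of $+$-type.

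Next I would read off from \cite[Propositions 6.4.4 and 6.4.5]{AKS} --- the results that, as discussed above, correct the spurious elimination of $S_8$ in \cite[Propn.~6.4.17(iv)]{AKS} --- the facts that: (i) in $G$ the only maximal subgroups containing an irreducibly-acting $A_8$ are $G$-conjugates of $M\cong A_{15}$; (ii) there are two $G$-classes each of such $A_{15}$ and of irreducibly-acting $A_8$, interchanged in pairs by $\delta$ and fixed by $\gamma$; and (iii) the extension $\Omega_{14}^+(p).\langle\gamma\rangle$ contains maximal subgroups $S_{15}$ and $S_8\cong L_4(2){:}2$, that is, both $A_{15}$ and $A_8$ are normalised by the graph automorphism $\gamma$. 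I would also note two elementary points: $A_8$ is maximal in $A_{15}$ (the degree-$15$ primitive group $L_4(2)$ being one of the maximal subgroups of $A_{15}$), so that ``$H$ second maximal in $G$'' is equivalent to (i); and inside a fixed copy of $A_{15}$ there are exactly two $A_{15}$-classes of irreducibly-acting $A_8$ --- the ``point'' and ``hyperplane'' copies of $L_4(2)$ --- fused by the odd permutations of $S_{15}$.

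The argument then proceeds exactly as for the $\Omega_6^+(p)$ family. Fix $H=A_8<M=A_{15}$ in $G$. Adjoining $\alpha\gamma$ for a suitable inner automorphism $\alpha$ extends $M$ to $S_{15}$ and thus interchanges the two $M$-classes of $A_8$; adjoining $\beta\gamma$ for another inner automorphism $\beta$ normalises $H$ to $S_8$. Hence $\alpha\gamma\beta\gamma$ is an inner automorphism of $G$ taking an $A_8$ of one $M$-class to an $A_8$ of the other, so the two $M$-classes of $A_8$ fuse into a single $G$-class; the same argument with the roles of $H$ and $M$ interchanged shows that the two copies of $A_{15}$ containing a given $H$ are $G$-conjugate. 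A short incidence count --- using $N_G(H)=H$ and the fact that $M$ then contains $2\cdot|A_{15}|/|A_8|$ conjugates of $H$, all in one $G$-class --- delivers ``exactly two''. The main obstacle is fact (iii): verifying that $A_8$, and not merely $A_{15}$, is normalised by the graph automorphism $\gamma$ (rather than by $\delta\gamma$) for \emph{every} $p$ in the stated classes, so that $\alpha\gamma\beta\gamma$ really is inner. This is the analogue of the additional congruence $p\equiv 7\bmod 8$ that had to be imposed in the six-dimensional case; the point to verify is that for the $14$-dimensional representation no such extra restriction beyond $p\equiv 3\bmod 4$ is needed --- equivalently, that the relevant spinor norm is a square for all $p$ in the stated classes --- which is precisely what the corrected propositions of \cite{AKS} assert.
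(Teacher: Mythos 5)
The proposal is correct and takes essentially the same route as the paper: both rest on the corrected data from Schr\"oder's thesis (two classes each of $S_{15}$ and of irreducibly-acting $S_8$, maximal in the relevant degree-$2$ extension of $\Omega_{14}^+(p)$, swapped by $\delta$, meeting $\Omega_{14}^+(p)$ in $A_{15}$ and $A_8$ respectively, with $S_8$ not contained in $S_{15}$), followed by the standard ``novelty'' argument used for all the earlier examples. The student simply makes explicit the congruence computation via the discriminant of the deleted permutation module and the conjugation/counting steps that the paper leaves implicit by appeal to ``the same configuration as above''.
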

\begin{proof}
Indeed, it is shown in \cite{AKS} that for $p\equiv 19,23,31,47\bmod 60$,
there are two conjugacy classes of subgroups $S_{15}$, maximal in $SO_{14}^+(p)$, 
and swapped by the diagonal automorphism $\delta$.
Moreover, it is shown that the intersection of $S_{15}$ 
with $\Omega_{14}^+(p)$ is $A_{15}$. 
Now the same argument applies to the
group $S_8$, acting irreducibly in the $14$-dimensional representation. Since
for this embedding, $S_8$ does not lie in $S_{15}$, it follows that
$S_8$ is maximal in $SO_{14}^+(p)$ in these cases.
\end{proof}

Exactly the same argument applies to the cases $p\equiv 13,29,37,41\bmod 60$
in $\Omega_{14}^-(p)$. It is possible that analogous examples also exist when
$\varepsilon p\equiv 1\bmod 4$, but in this case the outer automorphism group 
is $D_8$, and there are four classes each of $S_8$ and $S_{15}$, so the
situation is more complicated.

\section{Unbounded rank}
Generalizing to the representations of $L_n(2)$ of dimension $d-1$, where
$d=2^n-1$, we find that for $n$ even these representations
extend to emebeddings of $L_n(2){:}2$ in $SO_d(p)$
for all $p$, while for $n$ odd this happens only when the field of
order $p$ contains square roots of $2$, that is, when $p\equiv \pm 1\bmod 8$.

So, for example, the embeddings $L_5(2) < A_{31} < \Omega_{30}^\varepsilon(p)$
provide examples whenever all of the following conditions are satisfied:
\begin{itemize}
\item $\varepsilon p\equiv 3 \bmod 4$, 
\item $p\equiv \pm 1\bmod 8$, and
\item $p\equiv 1,2,4,8,16\bmod 31$. 
\end{itemize}
That is to say, for $\varepsilon=+$ we require
$p\equiv 39,47,63,95,159\bmod 248$, while for $\varepsilon=-$ we require
$p\equiv 1,33,97,225,233\bmod248$. 

For the purpose of demonstrating that examples exist of arbitrarily large rank
and/or characteristic,
it is sufficient to restrict to the case when $\varepsilon=-$,
 and further to the case when $p\equiv 1\bmod 4(d-1)$.
In this case, the embedding of $L_n(2)$ into $A_d$ and thence into
$\Omega_{d-1}^-(p)$ gives an example of a negative answer to Aschbacher's
question. Of course, there are many other examples.

\begin{theorem}
Let $p$ be a prime, and $\varepsilon=\pm$, such that $\varepsilon p\equiv 3\bmod 4$.
Let $n\ge 3$, and suppose that $p$ is a square modulo $d:=2^n-1$.
If $n$ is odd, suppose also that $p\equiv \pm1\bmod 8$.
Let $G=\Omega_{d-1}^\varepsilon(p)$, and $H\cong L_n(2)$ a subgroup of $G$.
Then $H$ is contained in exactly two maximal subgroups of $G$, which are
isomorphic to $A_d$ and conjugate to each other.
\end{theorem}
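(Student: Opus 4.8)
For $n=3$ and $n=4$ the assertion is already covered by the results for $PSL_4(p)$, $PSU_4(p)$ and $\Omega_{14}^\varepsilon(p)$ above, so assume $n\ge 5$. The plan is to set up, for general $n$, the automorphism-chasing argument used there.

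Recall that $L_n(2)$ has exactly two inequivalent $2$-transitive actions of degree $d=2^n-1$, on the points and the hyperplanes of $\FF_2^n$, interchanged by the transpose-inverse automorphism $\gamma_0$; either one realises $L_n(2)<A_d<S_d$, and the permutation module over $\FF_p$ decomposes as the trivial module plus its $(d-1)$-dimensional heart $W$, which is absolutely irreducible because $p\nmid d$. Both $A_d$ and $L_n(2)$ act on $W$ preserving the restriction of the standard symmetric form, of discriminant $d$ modulo squares; as $\dim W=d-1\equiv 6\bmod 8$, this form has type $+$ exactly when $-d$ is a square in $\FF_p$, and by quadratic reciprocity --- using $d\equiv 7\bmod 8$ --- this is controlled by the stated congruence condition on $p$ modulo $d$ together with $\varepsilon p\equiv 3\bmod 4$, so $W$ has type $\varepsilon$. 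The condition $\varepsilon p\equiv 3\bmod 4$ also makes the outer automorphism group of $G=\Omega_{d-1}^\varepsilon(p)$ equal to $2^2$, generated by a diagonal automorphism $\delta$ and a graph automorphism $\gamma$; and then, from the classification of maximal subgroups of the orthogonal groups, there are exactly two $G$-classes each of the subgroups $A_d$ and $L_n(2)$ of the above kind, the two classes of each being interchanged by $\delta$. Inside $SO_{d-1}^\varepsilon(p)=G.\langle\gamma\rangle$ --- the analogue of the groups used in the earlier theorems --- the subgroup $A_d$ extends to $S_d$ with $S_d\cap G=A_d$, and, as noted above, the $(d-1)$-dimensional representation of $L_n(2)$ extends to $L_n(2){:}2=\Aut(L_n(2))$ with $L_n(2){:}2\cap G=L_n(2)$: for all $p$ when $n$ is even, and, when $n$ is odd, precisely when $p\equiv\pm 1\bmod 8$. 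The key point is that $L_n(2){:}2\not\le S_d$: since $L_n(2)$ has minimal faithful permutation degree $d$, a copy of $L_n(2){:}2$ inside $S_d$ would have its socle $L_n(2)$ transitive on the $d$ points, hence $2$-transitive and so acting in one of the two ways above, but those are interchanged by $\gamma_0$, so no copy of $L_n(2){:}2$ can stabilise either --- a contradiction.

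Granting these facts, the argument is as for $PSL_4(p)$. Fix $H<M$ in $G$ with $H\cong L_n(2)$ and $M\cong A_d$. Adjoining $\alpha\gamma$, for a suitable inner automorphism $\alpha$, extends $M$ to $S_d$ and fuses in $G$ the two $M$-classes of irreducible subgroups $\cong L_n(2)$ contained in $M$; adjoining $\beta\gamma$, for a suitable inner $\beta$, normalises $H$ to $L_n(2){:}2$; and since $\gamma^2$ is inner, $\alpha\gamma\beta\gamma$ is an inner automorphism of $G$ carrying an $L_n(2)$ of one $M$-class to one of the other. Exchanging the roles of $H$ and $M$ shows that the two copies of $M$ containing $H$ are conjugate in $G$. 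There are exactly two of them, because the number of $G$-conjugates of $M$ containing $H$ equals the number of $M$-conjugacy classes of subgroups of $M$ that are $G$-conjugate to $H$; any such subgroup is a transitive $L_n(2)\le A_d$, hence comes from one of the two $2$-transitive actions, and as the normaliser of $L_n(2)$ in $S_d$ is $L_n(2)$ itself these give exactly two $M$-classes, fused in $G$ by the previous step. Since $H$ is maximal in $M\cong A_d$, there is nothing strictly between $H$ and either copy of $M$.

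The one step that is not routine once $n$ is unbounded --- and where no finite table is available --- is the assertion, used above, that up to $G$-conjugacy $M\cong A_d$ is the \emph{only} maximal overgroup of $H$ in $G$; this I expect to be the main difficulty. It must be extracted from Aschbacher's classification of the subgroups of the classical group $G$. For $n\ge 5$ the Landazuri--Seitz bounds (and their refinements) show that $2^n-2$ is the least dimension of a faithful $\FF_p$-representation of $L_n(2)$, so $L_n(2)$ embeds in no classical group of smaller dimension, and since moreover $W$ restricted to $H$ is absolutely irreducible, is not written over a proper subfield, and is neither tensor-decomposable nor tensor-induced, any maximal subgroup of $G$ containing $H$ lies in the class $\mathcal S$ of almost simple irreducible subgroups. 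A member $X$ of $\mathcal S$ with $L_n(2)\le X$ has a $(d-1)$-dimensional faithful $\FF_p$-representation and order at least $|L_n(2)|$; by Liebeck's bound on the orders of class-$\mathcal S$ subgroups this leaves, besides the alternating groups, only a bounded list of candidates, all of which are excluded; and for $X$ alternating, a $(d-1)$-dimensional faithful module together with the transitive subgroup $L_n(2)$ forces the degree of $X$ to be exactly $d$, so $X=A_d$.
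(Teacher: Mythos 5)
Your proof takes essentially the same approach as the paper's, whose entire argument consists of two sentences asserting that the stated congruences force $\mathrm{Out}(G)\cong 2^2$ with both $L_n(2){:}2$ and $S_d$ realised inside $SO_{d-1}^\varepsilon(p)$ but not $\Omega_{d-1}^\varepsilon(p)$, so that ``we have the same configuration as in all the other examples above''; you have simply written out explicitly the $\alpha\gamma\beta\gamma$ conjugation and counting arguments that the paper imports from its $PSL_4(p)$ theorem, together with the type/discriminant computation and the verification that $L_n(2){:}2\not\le S_d$. The one step you rightly flag as the main difficulty --- showing for unbounded $d$, where no tables of maximal subgroups exist, that $A_d$ is the only maximal overgroup of $H$ up to conjugacy --- is not addressed in the paper's proof at all, so your Aschbacher-class/Landazuri--Seitz sketch, though itself still incomplete (the cross-characteristic and defining-characteristic Lie-type candidates in class $\mathcal{S}$ are waved away rather than enumerated), goes beyond rather than falls short of what the paper supplies.
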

\begin{proof}
The above conditions ensure that $G$ has outer automorphism group of order $4$,
and that both $L_n(2){:}2$ and $S_d$ embed in $SO_{d-1}^\varepsilon(p)$ but
not in $\Omega_{d-1}^\varepsilon(p)$. Hence we have the same configuration as in
all the other examples above.
\end{proof}

We have now shown that there is no bound on the Lie rank of those $G$
for which Aschbacher's question has a negative answer.
In these examples, there are two conjugacy classes of $L_n(2)$ in 
$\Omega_{d-1}^\varepsilon(p)$, and two conjugacy classes of $A_d$, interchanged by
the diagonal automorphism. If instead $\varepsilon p\equiv1\bmod 4$, then there
are four classes of each, and the outer automorphism group of 
$\Omega_{d-1}^\varepsilon(p)$ is $D_8$. In \cite{BHRD}, two of the reflections
in $D_8$ are described as graph automorphisms $\gamma$, and the other two as
$\delta\gamma$, but unfortunately the two conjugates of $\gamma$ are not
distinguished from each other. For any particular choice of $\gamma$, two of the
four classes of $A_d$ extend to $S_d$, and the other two classes are interchanged.


\section{Other classical groups}
So far, all our examples with $G$ a classical group have occurred when $G$ is
in fact orthogonal. There is no bound on the characteristic, and there is
no bound on the rank. All three families of orthogonal groups (plus type,
minus type, and odd dimension) occur. It would be interesting to know if
the other classical groups, linear, unitary or symplectic, can occur.

Of course, the isomorphisms $L_4(p)\cong \Omega_6^+(p)$ and
$U_4(p)\cong \Omega^-(p)$ imply the existence of examples in linear and
unitary groups, but do examples exist in linear and unitary groups of
larger dimension? 
So far, I have not found any examples. The large outer automorphism groups
in these cases make the analysis very delicate.
There are potential examples of the form $L_3(4)<U_4(3)<L_6(p)$, but there
are three classes of each of $L_3(4)$ and $U_4(3)$, and the embeddings between them
are not given explicitly in \cite{BHRD}. Hence one needs extra detailed information
to resolve these cases. It seems likely, however, that this configuration
does not give any examples.

In the case of symplectic groups, over fields of odd prime order,
the outer automorphism group has order $2$, which is the ideal
situation for us. If one looks
through the tables of maximal subgroups of symplectic groups in dimensions up to $12$
given
in \cite{BHRD}, one finds, 
besides the case
$L_3(2)<A_7<S_4(7)$ already discussed, just one series of
potential examples, given by the embeddings
$A_5 < L_2(p) < S_6(p)$ for $p$ a prime, $p\equiv \pm11,\pm19\bmod 40$. 
However, in this case the embedding of $2\udot A_5$ in $Sp_6(p)$ also goes via 
the tensor product $Sp_2(p)\circ GO_3(p)$, so this $A_5$ lies in more than
two maximal subgroups of $Sp_6(p)$.

On the other hand, Anna Schroeder's PhD thesis \cite{AKS}
contains the lists of maximal subgroups of $S_{14}(q)$ and their 
automorphism groups. There one finds two more potential infinite series of examples,
given by the embeddings $J_2<S_6(p)<S_{14}(p)$ and $L_2(13)<S_6(p)<S_{14}(p)$
for suitable primes $p$. The relevant congruences are $p\equiv \pm11,\pm19\bmod 40$
for $J_2$, and $p\equiv\pm3,\pm27,\pm29,\pm35,\pm43,\pm51\bmod 104$
for $L_2(13)$. It is straightforward to check, in the same way
as before, that these
do indeed give examples of negative answers to Aschbacher's question.

\begin{theorem}
Let $p\equiv \pm11,\pm19\bmod 40$, and let $G=S_{14}(p)$. Let $H\cong \Ja_2$ be
a subgroup of $G$. Then $H$ is contained in exactly two maximal subgroups of $G$,
both isomorphic to $S_6(p)$, and conjugate to each other in $G$.
\end{theorem}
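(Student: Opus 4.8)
The plan is to mirror the argument used for the previous series of symplectic examples (and, before that, for the orthogonal cases), exploiting the fact that $\mathrm{Out}(S_{14}(p))$ has order $2$ when $p$ is an odd prime, generated by the diagonal automorphism $\delta$. First I would extract from Schroeder's thesis \cite{AKS} the relevant facts about the maximal subgroups of $G=S_{14}(p)$ for $p\equiv\pm11,\pm19\bmod40$: that there is a single conjugacy class of subgroups $M\cong S_6(p)$ arising from the (irreducible, symplectic-type) $14$-dimensional representation of $S_6(p)$, that $\Ja_2<S_6(p)$ via its $6$-dimensional representation over $\FF_p$ (which exists precisely under the stated congruence $p\equiv\pm11,\pm19\bmod40$, since one needs $\sqrt{5}\in\FF_p$ and the Schur index condition), and hence that every such $\Ja_2$ lies in some $G$-conjugate of $M$. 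The congruence $p\equiv\pm11,\pm19\bmod40$ should be exactly the condition forcing two classes of $\Ja_2$ in $G$, fused by $\delta$, and likewise two classes of $M=S_6(p)$ fused by $\delta$ --- this is the content I would quote from \cite{AKS}.

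Next I would verify that the only maximal subgroups of $G$ containing $H\cong\Ja_2$ are conjugates of $M$. Since $\Ja_2$ is quasisimple-adjacent (simple) and acts absolutely irreducibly preserving only a symplectic form on the $14$-space, the geometric and almost-simple classes $\mathcal{C}_1,\dots,\mathcal{C}_8,\mathcal{S}$ for $Sp_{14}(p)$ can be checked one at a time from Table 8.~of \cite{BHRD} (or the corresponding table in \cite{AKS}): no reducible, imprimitive, field-extension, tensor-decomposition, or classical-subgroup member can contain $\Ja_2$ acting this way, so the containment $\Ja_2<M$ must be through an $\mathcal{S}$-type subgroup, and the only candidate with $\Ja_2$ inside is $S_6(p)$ itself (again because $\Ja_2<S_6(p)$ is the unique such embedding). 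This step is essentially bookkeeping over the Aschbacher classes.

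The crux is then the same counting argument as in Theorem~9 (the $PSL_4(p)$ case) and the preceding symplectic discussion: working inside $G.2=S_{14}(p).2$, one adjoins $\delta$ in two different ways. Choosing a representative pair $H<M$, there is an element $g_1\in G$ with $g_1\delta$ normalizing $M$ (extending it to $S_6(p).2$ and swapping the two classes of $\Ja_2$ inside it), and an element $g_2\in G$ with $g_2\delta$ normalizing $H$ (extending it to $\Ja_2.2$ and swapping the two $G$-classes of $M$ containing it). Then $g_1\delta\cdot g_2\delta=g_1g_2^{\delta}\in G$ is an inner element conjugating one $\Ja_2$-class member to the other inside a fixed $M$; the symmetric statement shows the two $G$-conjugates of $M$ containing a fixed $H$ are themselves $G$-conjugate. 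Counting: $H$ lies in $|\Aut(M):S_6(p).2|\cdot(\text{something})$ --- more cleanly, since $N_{G.2}(H)=\Ja_2.2$ does \emph{not} embed in $N_{G.2}(M)=S_6(p).2$ (as $S_6(p).2$ has no subgroup $\Ja_2.2$, because $\Ja_2<S_6(p)$ does not extend compatibly, $\Ja_2$ being normalized only by $\delta$ in $G.2$ and $\delta$ not inducing the outer automorphism of $\Ja_2$ inside $S_6(p).2$), it follows by the standard index computation that $H$ lies in exactly two $G$-conjugates of $M$, interchanged by any element of $\Ja_2.2\setminus\Ja_2$.

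The main obstacle I anticipate is not the group-theoretic counting, which is by now routine, but pinning down from \cite{AKS} the precise class numbers and fusion pattern --- in particular confirming that for $p\equiv\pm11,\pm19\bmod40$ there are exactly two classes (not one, and not four) of each of $\Ja_2$ and $S_6(p)$ in $G$, and that $\delta$ genuinely fuses them rather than normalizing each class; and, relatedly, checking that $\Ja_2$ does not arise in any \emph{other} maximal subgroup of $S_{14}(p)$ (e.g.\ via some small almost-simple group containing $\Ja_2$, or via a tensor-product subgroup $Sp_2(p)\otimes Sp_7(p)$-type obstruction analogous to the $2\udot A_5$ issue flagged earlier for $S_6(p)$). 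Once those tables are in hand, the proof is three lines of the now-familiar form.
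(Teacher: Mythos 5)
Your proposal is correct and follows exactly the approach the paper intends: the paper in fact gives no proof of this theorem at all, saying only that it is ``straightforward to check, in the same way as before'' from the class data in Schroeder's thesis \cite{AKS}, i.e., precisely the two-classes-of-$H$-and-$M$-fused-by-$\delta$ counting argument that you spell out. (One trivial slip in your final aside: the relevant tensor-decomposition subgroup of $S_{14}(p)$ would be of type $Sp_2(p)\otimes GO_7(p)$, not $Sp_2(p)\otimes Sp_7(p)$, which does not exist.)
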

\begin{theorem}
Let $p\equiv \pm3,\pm27,\pm29,\pm35,\pm43,\pm51\bmod 104$, and let $G=S_{14}(p)$.
Let $H\cong L_2(13)$ be a subgroup of $G$ contained in $M\cong S_6(p)$. Then
$H$ is contained in exactly two maximal subgroups of $G$, both conjugate to $M$.
\end{theorem}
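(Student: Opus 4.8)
The plan is to proceed exactly as in the proofs of the earlier theorems --- in particular as for $\Omega_5(7)$, for $\Omega_6^+(p)\cong PSL_4(p)$, and for the companion statement with $H\cong\Ja_2$ --- using the classification of the maximal subgroups of $S_{14}(p)$, and of their normalizers in $\Aut(S_{14}(p))\cong PGSp_{14}(p)$, from \cite{AKS}. From that source I would first record the following: for $p$ in the stated residue classes there is a chain $H\cong L_2(13)<M\cong S_6(p)<G=S_{14}(p)$, in which $L_2(13)$ is the image of a faithful $6$-dimensional symplectic representation of $2\udot L_2(13)$ and $M$ is the image of $Sp_6(p)$ under the $14$-dimensional fundamental representation of highest weight $\omega_3$; that $H$ is maximal in $M$ and $M$ is maximal in $G$; and that the only maximal subgroups of $G$ containing $H$ are $G$-conjugates of $M$. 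Since $p$ is an odd prime, $\mathrm{Out}(G)$ has order $2$, generated by the diagonal automorphism $\delta$; the normalizer of $M$ in $\Aut(G)$ is $M.2\cong S_6(p).2=PGSp_6(p)$; and $N_G(M)=M$, so there is a single $G$-class of such subgroups $M$.

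The structural heart of the matter is the behaviour of $L_2(13)$ inside $M$. In $S_6(p)$ there are exactly two conjugacy classes of subgroups isomorphic to $L_2(13)$: such subgroups form a single $\Aut(S_6(p))=PGSp_6(p)$-orbit, which splits into two $S_6(p)$-classes interchanged by the diagonal automorphism; these correspond to the two $6$-dimensional representations of $2\udot L_2(13)$, which the outer automorphism $\phi$ of $L_2(13)$ interchanges. Composing either $6$-dimensional representation with $\omega_3$ yields, however, the \emph{same} $14$-dimensional representation of $2\udot L_2(13)$: it is faithful on the centre of $2\udot L_2(13)$, hence a sum of faithful irreducibles, and since those have degrees $6$, $12$ and $14$ it is in fact irreducible --- so it is one of the three faithful $14$-dimensional, i.e.\ principal-series, representations, each of which $\phi$ fixes because $\phi$ acts trivially on the split torus of $L_2(13)$. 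With this identification, the argument of the $PSL_4(p)$ proof applies: adjoining to $\langle H,M\rangle$ an automorphism $\alpha\delta$ with $\alpha$ inner and chosen to normalize $M$, together with a short orbit count (using $N_G(M)=M$, so that all conjugates of $M$ form a single $G$-class, and the two classes of $L_2(13)$ in $M$), shows that the given $H$ lies in exactly two $G$-conjugates of $M$; these are conjugate in $G$ because they lie in one $G$-class; and since $H$ is maximal in $M$, they are precisely the maximal subgroups of $G$ containing $H$.

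The step I expect to be the main obstacle is the claim that the \emph{only} maximal subgroups of $G=S_{14}(p)$ containing $H$ are conjugates of $M$. Establishing this means trawling Schroeder's list in \cite{AKS} and checking that no maximal subgroup in any geometric Aschbacher class, and no maximal subgroup of class $\mathcal{S}$ other than $M$, can contain an $L_2(13)$ acting through this particular $14$-dimensional representation; and some care is warranted here, since, as noted above, \cite{AKS} contains a crucial error in the closely related orthogonal case. One thing that looks dangerous but is not is the possibility --- as in the failed example $A_5<L_2(p)<S_6(p)$ --- of a second way for $L_2(13)$ to enter $S_6(p)$, via a tensor-decomposable subgroup such as $Sp_2(p)\circ GO_3(p)$; this cannot happen, because $2\udot L_2(13)$ has no faithful representation of degree $2$ (or of degree $3$), so its $6$-dimensional representation is tensor-indecomposable and $L_2(13)$ is a genuine $\mathcal{S}$-subgroup of $S_6(p)$. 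Finally, the congruences $p\equiv\pm3,\pm27,\pm29,\pm35,\pm43,\pm51\bmod104$ --- equivalently, $13$ a square and $2$ a non-square modulo $p$ --- are exactly the conditions, read off from \cite{BHRD} and \cite{AKS}, under which all of the ingredients above are in force: $L_2(13)$ maximal in $S_6(p)$, $M$ maximal in $G$, the outer-automorphism structure as described, and $L_2(13)$ lying in no further maximal subgroup of $G$.
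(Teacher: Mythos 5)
Your proposal is correct and follows essentially the same route as the paper, which gives no separate proof of this theorem but simply asserts that it follows ``in the same way as before'' from Schroeder's lists of maximal subgroups of $S_{14}(p)$ and their automorphism groups, via the standard argument with the diagonal automorphism. Your additional checks (that both $6$-dimensional representations of $2\udot L_2(13)$ yield the same irreducible $14$-dimensional one, and that the tensor-decomposition escape route which killed the $A_5<L_2(p)<S_6(p)$ series cannot occur here) are sound elaborations of what the paper leaves implicit.
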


\section{Further remarks} 
Far-reaching as the above examples are,
they have little, if any, impact on
Aschbacher's programme. This is because they all occur in sporadic or
classical groups, whereas Aschbacher is only proposing to use this
approach for exceptional groups of Lie type.
Our examples therefore merely show that his question is still too
broad, and that the question needs to be restricted to a smaller class
of groups than the class of almost simple groups.



%
The maximal subgroups are known completely for five of the ten families
of exceptional groups of Lie type,
and, of the remaining five, $E_8$ seems least likely to be a source of examples, since it 
admits neither diagonal nor graph automorphisms. Similarly, $F_4$ admits
no diagonal automorphisms, and admits a graph automorphism only in
characteristic $2$. Probably the most promising places to look for examples are
in $E_6$ with a graph automorphism, and in $E_7$ with a diagonal
automorphism. 

On the other hand, it is entirely conceivable that no examples exist in
the exceptional groups of Lie type. 

%

\section*{Acknowledgements}
I thank the Martin-Luther-Universit\"at Halle-Wittenberg
for a visiting professorship, during which the first version
of this paper was written, and especially
Professor Rebecca Waldecker, for inviting me, and arranging everything.
I thank Rebecca Waldecker, Imke Toborg and Kay Magaard
for fruitful discussions.

\end{document}